\def\r{\rightarrow}
\newcommand{\fdem}{\hspace*{\fill}~$\Box$\par\endtrivlist\unskip}
\renewcommand{\L}{\mathbb{L}}
\newcommand{\N}{\mathbb{N}}
\newcommand{\C}{\mathbb{C}}
\renewcommand{\r}{\mathop{\rightarrow}}
\newcommand{\cB}{\mbox{$\cal B$}}
\newtheorem{theo}{Theorem}
\newtheorem{pro}{Proposition}
\newenvironment{proof}[1]{\textit{Proof#1.\,}}{\fdem}
\newtheorem{lem}{Lemma}
\newtheorem{rem}{Remark}
\newtheorem{ex}{Example}
\newtheorem{cor}{Corollary}
\title{Computable bounds of $\ell^2$-spectral gap for discrete Markov chains with band transition matrices}
\author{Loïc HERV\'E, and James LEDOUX \footnote{INSA de Rennes, IRMAR, F-35042, France; CNRS, UMR 6625, Rennes, F-35708, France; Université Européenne de Bretagne, France. \{Loic.Herve,James.Ledoux\}@insa-rennes.fr}
%{\small version du \today \ -- \currenttime}
}
\date{}
\begin{document}

\maketitle
%

%\vspace*{-7mm}

\begin{abstract}
We analyse the $\ell^2(\pi)$-convergence rate of irreducible and aperiodic Markov chains with $N$-band transition probability matrix $P$ and with invariant distribution $\pi$. This analysis is heavily based on: first the study of the essential spectral radius $r_{ess}(P_{|\ell^2(\pi)})$ of $P_{|\ell^2(\pi)}$ derived from Hennion's quasi-compactness criteria; second  the connection between  the Spectral Gap property (SG$_2$) of $P$ on $\ell^2(\pi)$ and the $V$-geometric ergodicity of $P$. Specifically, (SG$_2$)  is shown to hold under the condition   
%\vspace*{-2mm}
\begin{equation*} 
\alpha_0 := \sum_{{m}=-N}^N \limsup_{i\r+\infty} \sqrt{P(i,i+{m})\, P^*(i+{m},i)}\ <\, 1. 
%\vspace*{-2mm}
\end{equation*}  
Moreover $r_{ess}(P_{|\ell^2(\pi)}) \leq \alpha_0$. Effective bounds on the convergence rate can be provided from a truncation procedure. 
\end{abstract}
\begin{center}
AMS subject classification : 60J10; 47B07

Keywords : $V$-geometric ergodicity, Essential spectral radius.
\end{center}
%\vspace*{-5mm}
%===========================
%==========================
\section{Introduction}
%===========================
%==========================
Let $P:=(P(i,j))_{(i,j)\in\N^2}$ be a Markov kernel on the countable state space $\N$.  
Throughout the paper we assume that $P$ is irreducible and aperiodic, that $P$ has a unique invariant probability measure denoted by $\pi:=(\pi(i))_{i\in\N}$, and finally that
\begin{equation} \label{ass-voisin}
\exists i_0\in\N,\ \exists N\in\N^*,\ \forall i\geq i_0\ :\quad |i-j| > N\ \Longrightarrow\ P(i,j)=0. \tag{\textbf{AS1}}
\end{equation}
We denote by $(\ell^2(\pi),\|\cdot\|_2)$ the  Hilbert space of sequences $(f(i))_{i\in\N}\in\C^{\N}$ such that $\|f\|_2 := [\, \sum_{i\geq0} |f(i)|^2\, \pi(i)\, ]^{1/2} < \infty$. Then $P$ 
defines a linear contraction on $\ell^2(\pi)$, and its adjoint operator $P^*$ on $\ell^2(\pi)$ is defined by $P^*(i,j) := \pi(j)\, P(j,i)/\pi(i)$. If $\pi(f):=\sum_{i\geq0} f(i)\, \pi(i)$, then the kernel $P$ is said to have the spectral gap property on  $\ell^2(\pi)$ if there exists $\rho\in(0,1)$ and $C\in(0,+\infty)$ such that 
\begin{equation} \label{ineg-gap-gene}
\forall n\geq1, \forall f\in\ell^2(\pi),\quad \|P^nf - \Pi f\|_2 \leq C\, \rho^n\, \|f\|_2 \quad \text{with} \quad \Pi f := \pi(f) 1_{\N}. \tag{\textbf{SG$_2$}}
\end{equation}
 A standard issue is to compute the value (or to find an upper bound) of 
%\vspace*{-1mm}
\begin{equation} \label{def-varho-gene}
\varrho_2 := \inf\{\rho\in(0,1) : \text{(\ref{ineg-gap-gene}) holds true}\}.
\end{equation}
In this work the quasi-compactness criteria of \cite{Hen93} is used to study (\ref{ineg-gap-gene}) and to estimate $\varrho_2$. In Section~\ref{sec-bounded-tp} it is proved that (\ref{ineg-gap-gene}) holds when 
\begin{equation} \label{ass-beta}
\alpha_0 := \sum_{{m}=-N}^N \limsup_{i\r+\infty} \sqrt{P(i,i+{m})\, P^*(i+{m},i)}\ <\, 1. \tag{\textbf{AS2}}
\end{equation}
Moreover $r_{ess}(P_{|\ell^2(\pi)}) \leq \alpha_0$. 
We refer to \cite{Hen93} for the definition of the essential spectral radius $r_{ess}(T)$ and for quasi-compactness of a bounded linear operator $T$ on a Banach space.  Under the  assumptions 
\begin{gather} 
 \quad \forall {m}=-N,\ldots,N,\quad P(i,i+{m}) \xrightarrow[i\r +\infty]{} a_{m}\in[0,1] \label{cond-lim-intro} \tag{\textbf{AS3}}\\
\frac{\pi(i+1)}{\pi(i)} \xrightarrow[i\r +\infty]{} \tau \in[0,1) \label{pi-tail} \tag{\textbf{AS4}}\\
\sum_{k=-N}^{N} k\, a_{k}\, < 0, \label{MoySautBorne} \tag{\textbf{NERI}}
\end{gather}
Property~(\ref{ass-beta}) holds (hence (\ref{ineg-gap-gene})) and $\alpha_0$ can be explicitly computed in function of $\tau$ and the $a_{m}$'s. 
Moreover, using the inequality  $r_{ess}(P_{|\ell^2(\pi)}) \leq \alpha_0$, Property~(\ref{ineg-gap-gene}) is proved to be connected to the  $V$-geometric ergodicity of $P$ for $V:= (\pi(n)^{-1/2})_{n\in\N}$. 
In particular, denoting the minimal $V$-geometrical ergodic rate by $\varrho_V$, it is proved that, either $\varrho_2$ and $\varrho_V$ are both less than $\alpha_0$, or $\varrho_2=\varrho_V$. As a result, an accurate bound of $\varrho_2$ can be obtained for random walks (RW) with i.d.~bounded increments using the results of \cite{HerLedJAP14}. Actually, any estimation of $\varrho_V$, for instance that derived in Section~\ref{sec-tronc} from the truncation procedure of \cite{HerLed14a}, provides an estimation of $\varrho_2$. We point out that all the previous results hold without any reversibility properties. 

The spectral gap property for Markov processes has been widely investigated in the discrete and continuous-time cases (e.g.~see \cite{Ros71,Che04}). There exist different definitions of the spectral gap property according that we are concerned with discrete or continuous-time case (e.g. see \cite{Yue00,MaoSon13}). The focus of our paper is on the discrete time case. In the reversible case, the equivalence between the geometrical ergodicity and (\ref{ineg-gap-gene}) is proved in \cite{RobRos97} and  Inequality $\varrho_2 \leq \varrho_{V}$ is obtained in \cite[Th.6.1.]{Bax05}.  This equivalence fails in the non-reversible case (see \cite{KonMey12}). The link between $\varrho_2$ and $\varrho_{V}$ stated in our Proposition~\ref{pro-RW-SG} is obtained with no reversibility condition. Formulae for $\varrho_2$ are provided in \cite{StaWub11,Wu12} in terms of isoperimetric constants which are related to $P$ in reversible case and to $P$ and $P^*$ in non-reversible case. However, to the best of our knowledge, no explicit value (or upper bounds) of $\varrho_2$ can be derived from these formulae for discrete Markov chains with band transition matrices. Our explicit bound $r_{ess}(P_{|\ell^2(\pi)}) \leq \alpha_0$ in Theorem~\ref{theo-spec-gap-gene} is the preliminary key results in this work. Recall that $r_{ess}(P_{|\ell^2(\pi)})$ is a natural lower bound of $\varrho_2$ (apply \cite[Prop.~2.1]{HerLedJAP14} with the Banach space $\ell^2(\pi)$).  The essential spectral radius of Markov operators on a $\L^2$-type space is investigated for Markov chains with general state  space in \cite{Wu04}, but no explicit bound for $r_{ess}(P_{|\ell^2(\pi)})$ can be derived a priori from these theoretical results for  Markov chains with band transition matrices, except in the reversible case \cite[Th.~5.5.]{Wu04}. 

%\vspace*{-4mm}
%===========================
%==========================
\section{Property~(\ref{ineg-gap-gene}) and $V$-geometrical ergodicity} \label{sec-bounded-tp}
%===========================
%==========================
%
%\vspace*{-2mm}
\begin{theo} \label{theo-spec-gap-gene}
If Condition~\emph{(\ref{ass-beta})} holds, then $P$ satisfies \emph{(\ref{ineg-gap-gene})}. Moreover $r_{ess}(P_{|\ell^2(\pi)}) \leq \alpha_0$. 
\end{theo}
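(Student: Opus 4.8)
The plan is to apply Hennion's quasi-compactness criterion \cite{Hen93} to $P$ on $\ell^2(\pi)$ and then to convert quasi-compactness into the spectral gap. First I would split $P$ along its diagonals to isolate an operator whose norm is close to $\alpha_0$. Fix $L\ge i_0+N$ and let $T_L$ be the operator keeping only the rows $i>L$, so that $(T_Lf)(i)=\sum_{m=-N}^{N}P(i,i+m)\,f(i+m)$ for $i>L$ and $(T_Lf)(i)=0$ otherwise, the band assumption~(\ref{ass-voisin}) guaranteeing this representation for $i>L\ge i_0$. Writing $T_L=\sum_{m=-N}^{N}\Delta_m$ with $(\Delta_mf)(i)=\mathbf{1}_{\{i>L\}}\,P(i,i+m)\,f(i+m)$, a direct computation of $\|\Delta_mf\|_2^2$ followed by the change of index $j=i+m$ and the adjoint identity $P^*(i+m,i)=\pi(i)P(i,i+m)/\pi(i+m)$ gives $\|\Delta_m\|_2^2=\sup_{i>L}P(i,i+m)\,P^*(i+m,i)$. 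Summing over $m$ and letting $L\r+\infty$, the triangle inequality yields $\|T_L\|_2\le\sum_{m}\sup_{i>L}\sqrt{P(i,i+m)P^*(i+m,i)}\longrightarrow\alpha_0$, so for every $\varepsilon>0$ one may choose $L$ with $\|T_L\|_2\le\alpha_0+\varepsilon$.

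The complementary operator $P-T_L$ acts only through the finitely many rows $i\le L$, each of which is a continuous functional on $\ell^2(\pi)$ (coordinate evaluations are bounded since $\pi(i)>0$, and $P$ is bounded), hence $P-T_L$ is finite-rank, thus compact. Equivalently, one obtains the Doeblin--Fortet inequality $\|Pf\|_2\le(\alpha_0+\varepsilon)\|f\|_2+C_L\,|f|_L$, where $|f|_L:=(\sum_{i\le L}|f(i)|^2\pi(i))^{1/2}$ is a continuous seminorm for which the closed unit ball of $\ell^2(\pi)$ is relatively compact (a bounded sequence in $\ell^2(\pi)$ admits a coordinatewise convergent subsequence, which converges in $|\cdot|_L$). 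Hennion's theorem \cite{Hen93} then applies and gives $r_{ess}(P_{|\ell^2(\pi)})\le\alpha_0+\varepsilon$; letting $\varepsilon\r0$ yields $r_{ess}(P_{|\ell^2(\pi)})\le\alpha_0<1$, so that $P$ is quasi-compact on $\ell^2(\pi)$.

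It remains to deduce~(\ref{ineg-gap-gene}). Since $P$ is a contraction with $r_{ess}(P_{|\ell^2(\pi)})\le\alpha_0<1$, the part of $\sigma(P)$ outside the closed disc of radius $\alpha_0$ consists of finitely many eigenvalues of finite multiplicity. On the peripheral circle, irreducibility forces the eigenvalue $1$ to be simple with spectral projection $\Pi f=\pi(f)\,1_{\N}$ (here $\pi$ is a bounded functional on $\ell^2(\pi)$ because $\|1_{\N}\|_2=1$), while aperiodicity rules out any other eigenvalue of modulus~$1$. Hence $\sigma(P)\setminus\{1\}$ is contained in a disc of some radius $\rho_\ast<1$, and the standard spectral decomposition attached to quasi-compactness gives $\|P^nf-\Pi f\|_2\le C\,\rho^n\,\|f\|_2$ for every $\rho\in(\rho_\ast,1)$, which is precisely~(\ref{ineg-gap-gene}).

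I expect the main obstacle to be twofold. The delicate computational point is the exact evaluation of $\|\Delta_m\|_2$, namely checking that symmetrization through $P^*$ produces precisely $\sqrt{P(i,i+m)P^*(i+m,i)}$ and that truncation lets the supremum decrease to the limsup defining $\alpha_0$. The more structural point is the peripheral spectral analysis: turning irreducibility and aperiodicity into the simplicity of $1$ and the absence of other unimodular eigenvalues, which is exactly what upgrades quasi-compactness into a genuine spectral gap.
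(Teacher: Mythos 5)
Your proof is correct and follows essentially the same route as the paper: the diagonal decomposition together with the identity $P(i,i+m)^2\pi(i)/\pi(i+m)=P(i,i+m)\,P^*(i+m,i)$ is exactly the paper's estimate of its terms $\|F_m\|_2$, and Hennion's criterion plus irreducibility/aperiodicity finishes the argument as in the text. The only (harmless) difference is that you treat the finitely many low rows via an explicit finite-rank operator and a finite-dimensional seminorm, whereas the paper absorbs them into the auxiliary norm $\|f\|_1$ and invokes the compact embedding of $\ell^2(\pi)$ into $\ell^1(\pi)$.
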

%
%\vspace*{-2mm}
\begin{proof}{} 
Consider the Banach space $\ell^1(\pi):=\{(f(i))_{i\in\N}\in\C^{\N}$: $\|f\|_1 := \sum_{i\geq0} |f(i)|\, \pi(i) < \infty\}$. 
%
%\vspace*{-7mm}
%
\begin{lem} \label{lem-D-F-gene}
For any $\alpha>\alpha_0$, there exists a positive constant $L\equiv L(\alpha)$ such that 
$$\forall f\in\ell^2(\pi),\quad \|Pf\|_2 \leq \alpha\, \|f\|_2 + L\|f\|_1.$$
\end{lem}
%
%\vspace*{-3mm}
Since the identity map is compact from $\ell^2(\pi)$ into $\ell^1(\pi)$ (from the Cantor diagonal procedure), it follows from Lemma~\ref{lem-D-F-gene} and from \cite{Hen93} that $P$ is quasi-compact on $\ell^2(\pi)$ with $r_{ess}(P_{|\ell^2(\pi)}) \leq \alpha$. Since $\alpha$ can be chosen arbitrarily close to $\alpha_0$, this gives $r_{ess}(P_{|\ell^2(\pi)}) \leq \alpha_0$. Then (\ref{ineg-gap-gene}) is deduced from aperiodicity and irreducibility assumptions.
\end{proof}
\begin{proof}{}
Under Assumption~(\ref{ass-voisin}), define 
\begin{equation} \label{beta-k}
\forall i\geq i_0,\ \forall {m}=-N,\ldots,N,\quad \beta_{m}(i) := \sqrt{P(i,i+{m})\, P^*(i+{m},i)}.
\end{equation} 
Let $\alpha>\alpha_0$, with $\alpha_0$ given in (\ref{ass-beta}). Fix $\ell\equiv \ell(\alpha) \geq i_0$ such that 
$\sum_{{m}=-N}^{N} \,  \sup_{i\geq \ell} \beta_{m}(i) \leq \alpha$. For $f\in\ell^2(\pi)$ we have from Minkowski's inequality and the band structure of $P$ for $i\geq \ell$ 
\begin{eqnarray}
\|Pf\|_2 &\leq& \bigg[\sum_{i<\ell}\big|(Pf)(i)\big|^2\pi(i)\, \bigg]^{1/2} 
+\ \bigg[\sum_{i\geq \ell} \bigg|\sum_{{m}=-N}^{N} P(i,i+{m})\, f(i+{m})\bigg|^2\pi(i)\, \bigg]^{1/2} \nonumber \\ 
&\leq& L\, \sum_{i<\ell} |(Pf)(i)|\, \pi(i)\,  + \bigg[\sum_{i\geq \ell} \bigg|\sum_{{m}=-N}^{N} P(i,i+{m})\, f(i+{m})\bigg|^2\pi(i)\, \bigg]^{1/2} \label{ineg-DF-inter1}
\end{eqnarray}
where $L\equiv L_\ell>0$ comes from equivalence of norms on $\C^{\ell}$. Moreover we have $\sum_{i<\ell}|(Pf)(i)|\, \pi(i) \leq \|Pf\|_1 \leq \|f\|_1$. To control the second term in (\ref{ineg-DF-inter1}), define $F_{m}=(F_{m}(i))_{i\in\N}\in\ell^2(\pi)$ by $F_{m}(i):= P(i,i+{m})\, f(i+{m}) (1 - 1_{\{0,\ldots,\ell-1\}}(i))$ for $-N \leq m \leq N$. Then 
$$
\bigg[\sum_{i\geq \ell} \bigg|\sum_{{m}=-N}^{N} P(i,i+{m})\, f(i+{m})\bigg|^2\pi(i)\, \bigg]^{1/2}= \big\|\sum_{{m}=-N}^{N} F_{m}\, \big\|_2 
 \leq    \sum_{{m}=-N}^{N} \| F_{m}\|_2.
%\vspace*{-4mm}
$$
\begin{eqnarray*}
\text{and }\quad \| F_{m}\|_2^2 &=& \sum_{i\geq \ell} P(i,i+{m})^2\, |f(i+{m})|^2\pi(i) \\[-2mm]
&=& \sum_{i\geq \ell} P(i,i+{m})\frac{\pi(i)\, P(i,i+{m})}{\pi(i+{m)}}\, |f(i+{m})|^2\pi(i+{m)} \\[-1mm] 
& \leq &  \sup_{i\geq \ell} \beta_{m}(i)^2\, \|f\|_2^2 \qquad \text{(from the definition of $P^*$ and from (\ref{beta-k}))}.\\[-3mm]
\end{eqnarray*}
The statement in Lemma~\ref{lem-D-F-gene} can be deduced from the previous inequality and from (\ref{ineg-DF-inter1}). 
\end{proof}

The core of our approach to estimate $\varrho_2$ is the relationship between Property~(\ref{ineg-gap-gene}) and the $V-$geometric ergodicity. Indeed, specify Theorem~\ref{theo-spec-gap-gene} in terms of the $V-$geometric ergodicity with $V := ({\pi(n)}^{-1/2})_{n\in\N}$. Let $(\cB_{V},\|\cdot\|_{V})$ denote the space of sequences $(g(n))_{n\in\N}\in\C^\N$ such that $\|g\|_{V} := \sup_{n\in\N} V(n)^{-1}\, |g(n)| < \infty$.
Recall that $P$ is said to be $V$-geometrically ergodic if $P$ satisfies the spectral gap property on $\cB_V$, namely: there exist $C\in(0,+\infty)$ and $\rho\in (0,1)$ such that  
\begin{equation} \label{ineg-gap-V} 
\forall n\geq1, \forall f\in\cB_V,\quad \|P^nf - \Pi f\|_V \leq C\, \rho^n\, \|f\|_V. \tag{\textbf{SG$_V$}} 
\end{equation}
When this property holds, we define 
%\vspace*{-1mm}
\begin{equation} \label{def-rho-V} 
\varrho_V := \inf\{\rho\in(0,1) : \text{(\ref{ineg-gap-V}) holds true}\}.
\end{equation} 

\begin{rem} \label{rem_fle-alpha}
Under Assumptions~\emph{(\ref{cond-lim-intro})} and \emph{(\ref{pi-tail})}, we have 
%\vspace*{-2mm}
\begin{equation} \label{fle-alpha}
\alpha_0 := \sum_{{m}=-N}^N \limsup_{i\r+\infty} \sqrt{P(i,i+{m})\, P^*(i+{m},i)} 
 = \begin{cases}
 \text{$\displaystyle\sum_{{m}=-N}^{N} a_{m}\, \tau^{-{m/2}}$} & \text{if }\  \tau\in(0,1) \\[0.12cm]
\quad  a_0 & \text{if }\  \tau=0.  \\
\end{cases}
\end{equation}
Indeed, if \emph{(\ref{pi-tail})} holds with $\tau\in(0,1)$, then the claimed formula follows from the definition of $P^*$. If $\tau=0$ in \emph{(\ref{pi-tail})}, then $a_{m}=0$ for every ${m}=1,\ldots,N$ from $\sum_{m=-N}^N P(i+m,i)\, \pi(i+m)/\pi(i) = 1$. Thus $a_{-m}
=0$ when ${m}<0$. Hence $\alpha_0=a_0$.   
\end{rem}
\begin{pro}  \label{pro-RW-SG} 
If $P$ and $\pi$ satisfy Assumptions~\emph{(\ref{cond-lim-intro})}, \emph{(\ref{pi-tail})} and \emph{(\ref{MoySautBorne})}, then $P$ satisfies~\emph{(\ref{ass-beta})} (with $\alpha_0<1$ given in (\ref{fle-alpha})). Moreover $P$ satisfies both \emph{(\ref{ineg-gap-gene})} and \emph{(\ref{ineg-gap-V})} with $V := ({\pi(n)}^{-1/2})_{n\in\N}$, we have $\max(r_{ess}(P_{|{\cal B}_V}),r_{ess}(P_{|\ell^2(\pi)})) \leq \alpha_0$, and the next  assertions hold: 
%\vspace*{-8mm}
\begin{enumerate}[(a)]
  \item if $\varrho_{V} \leq \alpha_0$, 
	then $\varrho_2 \leq \alpha_0$; 
	\item if $\varrho_{V} > \alpha_0$, 
	then $\varrho_2 = \varrho_{V}$.
\end{enumerate}
\end{pro}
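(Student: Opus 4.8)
The whole proof hinges on the continuous embedding $\ell^2(\pi)\hookrightarrow\cB_V$: for $f\in\ell^2(\pi)$ one has $|f(i)|^2\pi(i)\le\|f\|_2^2$, hence $|f(i)|\le\|f\|_2\,V(i)$ and $\|f\|_V\le\|f\|_2$. The plan is to obtain the $\ell^2(\pi)$-statements from Theorem~\ref{theo-spec-gap-gene}, to prove the $\cB_V$-statements by the same Hennion scheme, and then to compare the two spectral gaps through this embedding. First I would verify Condition~(\ref{ass-beta}), i.e. $\alpha_0<1$, using the closed form of $\alpha_0$ supplied by Remark~\ref{rem_fle-alpha}. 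If $\tau=0$ then $\alpha_0=a_0$; since $\sum_{m}a_m=1$ (row sums of $P$) while $a_m=0$ for $m\ge1$ and (\ref{MoySautBorne}) forces $a_{-k}>0$ for some $k\ge1$, this gives $a_0=1-\sum_{k\ge1}a_{-k}<1$. If $\tau\in(0,1)$, set $\varphi(s):=\sum_{m=-N}^{N}a_m s^{m}$, so $\alpha_0=\varphi(\tau^{-1/2})$; using $\sum_m P(i,i+m)=1$ and, via $\pi P=\pi$, $\sum_m P^*(i,i+m)=1$ together with (\ref{cond-lim-intro})--(\ref{pi-tail}) I obtain the normalisations $\varphi(1)=1$ and $\varphi(\tau^{-1})=1$. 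Since $t\mapsto\varphi(e^{t})$ is convex and $\tau^{-1/2}$ is the geometric mean of $1$ and $\tau^{-1}$, convexity yields $\alpha_0\le1$, strictly because (\ref{MoySautBorne}) excludes all mass at $m=0$ (which would force $\sum_m m\,a_m=0$). Thus $\alpha_0<1$, and Theorem~\ref{theo-spec-gap-gene} immediately delivers (\ref{ineg-gap-gene}) and $r_{ess}(P_{|\ell^2(\pi)})\le\alpha_0$.

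For the $\cB_V$-side I would mirror Lemma~\ref{lem-D-F-gene}. The key computation is $(PV)(i)/V(i)=\sum_{m=-N}^{N}\beta_m(i)$ with $\beta_m$ as in (\ref{beta-k}), and $\sum_m\sup_{i\ge\ell}\beta_m(i)\to\alpha_0$ as $\ell\to\infty$ by (\ref{cond-lim-intro})--(\ref{pi-tail}). Hence, for $\alpha>\alpha_0$ and $\ell$ large, $\sup_{i\ge\ell}V(i)^{-1}|(Pg)(i)|\le\alpha\|g\|_V$, while the rows $i<\ell$ contribute a finite-rank, hence compact, operator; this is a Doeblin--Fortet inequality $\|Pg\|_V\le\alpha\|g\|_V+L\,|g|$ on $\cB_V$. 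By \cite{Hen93}, $P$ is quasi-compact on $\cB_V$ with $r_{ess}(P_{|\cB_V})\le\alpha_0$, and, exactly as in the proof of Theorem~\ref{theo-spec-gap-gene}, irreducibility and aperiodicity upgrade this to (\ref{ineg-gap-V}). Combined with the first paragraph this gives $\max(r_{ess}(P_{|\cB_V}),r_{ess}(P_{|\ell^2(\pi)}))\le\alpha_0$.

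The main obstacle is the spectral correspondence: for $|\lambda|>\alpha_0$ the eigenvalues and associated (generalised) eigenspaces of $P$ on $\ell^2(\pi)$ and on $\cB_V$ must coincide. One inclusion is free from $\ell^2(\pi)\hookrightarrow\cB_V$. The delicate point is the regularity: if $g\in\cB_V\setminus\{0\}$ satisfies $Pg=\lambda g$ with $|\lambda|>\alpha_0$, then $g\in\ell^2(\pi)$. Here I set $\tilde g(i):=\pi(i)^{1/2}g(i)$, so that $g\in\cB_V\Leftrightarrow\tilde g\in\ell^\infty$ and $g\in\ell^2(\pi)\Leftrightarrow\tilde g\in\ell^2(\N)$, while the eigenequation reads, for $i\ge i_0$, $\lambda\,\tilde g(i)=\sum_{m=-N}^{N}\beta_m(i)\,\tilde g(i+m)$. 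Fixing $\alpha\in(\alpha_0,|\lambda|)$ and $\ell$ with $\sum_m\beta_m(i)\le\alpha$ for $i\ge\ell$, I get $|\tilde g(i)|\le r\max_{|m|\le N}|\tilde g(i+m)|$ with $r:=\alpha/|\lambda|<1$. Writing $u_k:=\sup_{i\ge k}|\tilde g(i)|<\infty$, this propagates to $u_k\le r\,u_{k-N}$ for $k\ge\ell$, so $\tilde g$ decays geometrically and $\tilde g\in\ell^2(\N)$, i.e. $g\in\ell^2(\pi)$. The same maximum-principle argument, now with a geometrically decaying inhomogeneous term, treats generalised eigenvectors $(P-\lambda)^p g=0$; hence the finite-dimensional spectral subspaces attached to $\{|z|>\alpha_0\}$ are identical in the two spaces.

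Finally I would read off (a) and (b). Both operators are quasi-compact with essential radius $\le\alpha_0$, and $1$ is a simple dominant eigenvalue on each (this is the content of (\ref{ineg-gap-gene}) and (\ref{ineg-gap-V})). Consequently $\varrho_2=\max(r_{ess}(P_{|\ell^2(\pi)}),\beta)$ and $\varrho_V=\max(r_{ess}(P_{|\cB_V}),\beta)$, where $\beta$ is the largest modulus of an eigenvalue lying in the annulus $\alpha_0<|z|<1$ --- the \emph{same} $\beta$ for both spaces by the previous paragraph, and absent exactly when that annulus is eigenvalue-free. If $\varrho_V\le\alpha_0$ the annulus is eigenvalue-free, whence $\varrho_2=r_{ess}(P_{|\ell^2(\pi)})\le\alpha_0$, proving (a). If $\varrho_V>\alpha_0$ then $\beta=\varrho_V>\alpha_0\ge r_{ess}(P_{|\ell^2(\pi)})$, so $\varrho_2=\beta=\varrho_V$, proving (b).
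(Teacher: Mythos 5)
Your proof is correct and its overall architecture matches the paper's: verify $\alpha_0<1$ by convexity of the limiting generating function, invoke Theorem~\ref{theo-spec-gap-gene} for the $\ell^2(\pi)$ side, use the drift relation $(PV)(i)/V(i)=\sum_m\beta_m(i)\to\alpha_0$ for the $\cB_V$ side, and compare the two spectra through the inclusion $\ell^2(\pi)\subset\cB_V$. The one genuinely different ingredient is the regularity step in case (b): where the paper cites \cite[Prop.~2.2]{HerLedJAP14} to get $|f(n)|=\text{O}(V(n)^{\beta})$ with $\beta<1$ for a $\cB_V$-eigenfunction, you prove directly that $\tilde g=\pi^{1/2}g$ satisfies $u_k\le r\,u_{k-N}$ and hence decays geometrically; this is self-contained, elementary, and in fact stronger than what is needed (it covers every eigenvalue of modulus $>\alpha_0$ and generalised eigenvectors, whereas the paper only needs the dominant eigenvalue on $\cB_V$). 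Your midpoint-convexity argument for $\alpha_0=\varphi(\tau^{-1/2})<1$, based on $\varphi(1)=\varphi(\tau^{-1})=1$ and strict convexity of $t\mapsto\varphi(e^t)$ (excluded degenerate case via (\ref{MoySautBorne})), is a cosmetic variant of the paper's analysis of $\psi$ at $\tau$ and $1$ together with $\psi'(1)>0$. Two small imprecisions, neither fatal: in the Doeblin--Fortet inequality on $\cB_V$ you should specify the auxiliary weak norm for which the $\|\cdot\|_V$-unit ball is relatively compact (the paper sidesteps this by invoking the standard drift-condition results of \cite[Prop.~3.1]{HerLedJAP14}); and in case (a) the identity $\varrho_2=r_{ess}(P_{|\ell^2(\pi)})$ need not hold, since there may be eigenvalues of modulus in $(r_{ess}(P_{|\ell^2(\pi)}),\alpha_0]$ --- but the conclusion $\varrho_2\le\alpha_0$ is exactly what the eigenvalue-free annulus gives, so (a) and (b) stand.
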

\begin{proof}{}
If $\tau=0$ in (\ref{pi-tail}), then $\alpha_0=a_0<1$ from (\ref{fle-alpha}) and (\ref{MoySautBorne}). Now assume that (\ref{pi-tail}) holds with $\tau\in(0,1)$. Then $\alpha_0=\sum_{m=-N}^{N} a_{m}\, \tau^{-{m/2}} = \psi(\sqrt{\tau})$, where:  
$\forall t>0,\ \psi(t) := \sum_{k=-N}^{N} a_{k}\, t^{-k}$. Moreover it easily follows from the invariance of $\pi$ that $\psi(\tau)=1$. Inequality $\alpha_0 =\psi(\sqrt{\tau}) < 1$ is deduced from the following assertions: $\forall t \in(\tau,1),\ \psi(t)<1$ and $\forall t \in (0,\tau) \cup (1,+\infty),\ \psi(t) > 1$. To prove these properties, note that $\psi(\tau)=\psi(1)=1$ and that $\psi$ is convex on $(0,+\infty)$. Moreover we have $\lim_{t\r+\infty} \psi(t)= +\infty$ since $a_k>0$ for some $k<0$ (use $\psi(\tau)=\psi(1)=1$ and $\tau\in(0,1)$). Similarly, $\lim_{t\r 0^{+}} \psi(t) =  +\infty$ since $a_k>0$ for some $k>0$. This gives the desired properties on $\psi$ since $\psi'(1) > 0$ from~(\ref{MoySautBorne}).

(\ref{ineg-gap-gene}) and $r_{ess}(P_{|\ell^2(\pi)}) \leq \alpha_0$ follow from Theorem~\ref{theo-spec-gap-gene}. Next (\ref{ineg-gap-V}) is deduced from the well-known link between geometric ergodicity and the following drift inequality: 
\begin{equation} \label{drift-V}
\forall\alpha\in(\alpha_0,1),\ \exists L\equiv L_\alpha>0, \quad PV \leq \alpha V + L\, 1_{\N}. 
\end{equation}
This inequality holds from $\lim_i (PV)(i)/V(i) = \alpha_0$. 

Then (\ref{ineg-gap-V}) is derived from (\ref{drift-V}) using aperiodicity and irreducibility. It also follows from  (\ref{drift-V}) that $r_{ess}(P_{|{\cal B}_V}) \leq \alpha$ (see \cite[Prop.~3.1]{HerLedJAP14}). Thus $r_{ess}(P_{|{\cal B}_V}) \leq \alpha_0$. 

Now we prove $(a)$ and $(b)$ using the spectral properties of \cite[Prop.~2.1]{HerLedJAP14} of both $P_{|\ell^2(\pi)}$ and $P_{|{\cal B}_V}$ (due to quasi-compactness, see \cite{Hen93}). We will also use the following obvious inclusion: $\ell^2(\pi) \subset \cB_{V}$. In particular  every eigenvalue of $P_{|\ell^2(\pi)}$ is also an eigenvalue for $P_{|{\cal B}_V}$. 
First assume that $\varrho_{V} \leq \alpha_0$. Then there is no eigenvalue for $P_{|{\cal B}_V}$ in the annulus $\Gamma:=\{\lambda\in\C : \alpha_0 < |\lambda| < 1\}$ since $r_{ess}(P_{|{\cal B}_V}) \leq \alpha_0$. From $\ell^2(\pi) \subset \cB_{V}$ it follows that there is also no eigenvalue for $P_{|\ell^2(\pi)}$ in this  annulus. Hence $\varrho_2 \leq \alpha_0$ since $r_{ess}(P_{|\ell^2(\pi)}) \leq \alpha_0$. 
Second assume that $\varrho_{V} > \alpha_0$. Then $P_{|{\cal B}_V}$ admits an eigenvalue $\lambda\in\C$ such that $|\lambda| = \varrho_{V}$. Let $f\in\cB_{V}$, $f\neq 0$, such that $Pf = \lambda f$. We know from \cite[Prop.~2.2]{HerLedJAP14} that there exists some $\beta\equiv\beta_\lambda\in(0,1)$ such that $|f(n)| = \text{O}(V(n)^{\beta}) = \text{O}(\pi(n)^{-\beta/2})$, so that $|f(n)|^2\pi(n) = \text{O}(\pi(n)^{(1-\beta)})$, thus $f\in \ell^2(\pi)$ from (\ref{pi-tail}). We have proved that $\varrho_2 \geq \varrho_{V}$. Finally the converse inequality is true since every eigenvalue of $P_{|\ell^2(\pi)}$ is an eigenvalue for $P_{|{\cal B}_V}$. Thus $\varrho_2 = \varrho_{V}$.  
\end{proof}

From Proposition~\ref{pro-RW-SG}, any estimation of $\varrho_V$ provides an estimation of $\varrho_2$. This is illustrated in Example~\ref{ex-rw-gene} and Corollary~\ref{th-first}. Markov chains in Example~\ref{ex-rw-gene} have been studied in details in \cite[Section~3]{HerLedJAP14}. Also mention that further technical details are reported in \cite{HerLedHal15}. 
\begin{ex} [RWs with i.d.~bounded increments] \label{ex-rw-gene}
Let $P$ be defined as follows. There exist some positive integers $c,g,d\in\N^*$ such that 
%\vspace*{-4mm}
\begin{subequations}
\begin{gather*}
\forall i\in\{0,\ldots,g-1\},\quad \sum_{j = 0}^{c} P(i,j)=1; \label{cond-bound-prob} \\[-1mm]
\forall i\ge g, \forall j\in\N, \quad P(i,j) = \begin{cases}
 a_{j-i} & \text{if }\  i-g\leq j \leq i+d \\
 0 & \text{otherwise.}  \\
\end{cases} \label{Def_HRW-ter} \\[-1mm]
(a_{-g},\ldots,a_d)\in[0,1]^{g+d+1} : a_{-g}>0, \ a_d>0, \ \sum_{k=-g}^{d} a_k=1. 
\label{Def_HRW-bis} 
\end{gather*} 
\end{subequations}
%

%\vspace*{-4mm}

\noindent Assume that $P$ is aperiodic and irreducible, and satisfies~\emph{(\ref{MoySautBorne})}.% holds, that is: $\sum_{k=-g}^{d} k\, a_{k}\, < 0$. 
Then $P$ has a unique invariant distribution $\pi$. It can be derived from standard results of linear difference equation that $\pi(n) \sim c\, \tau^n$ when $n\r+\infty$, with $\tau\in(0,1)$ defined by $\psi(\tau)  = 1$, where $\psi(t) := \sum_{k=-N}^{N} a_{k}\, t^{-k}$. Thus, if $\gamma := \tau^{-1/2}$, then $\cB_V=\{(g(n))_{n\in\N}\in\C^\N,\ \sup_{n\in\N} \gamma^{-n}\, |g(n)| < \infty\}$. Then we know from \cite[Prop.~3.2]{HerLedJAP14} that $r_{ess}(P_{|{\cal B}_V}) =\alpha_0$ with $\alpha_0$ given in (\ref{fle-alpha}), and that $\varrho_{V}$ can be computed from an algebraic polynomial elimination. From this computation, Proposition~\ref{pro-RW-SG} provides an accurate estimation of $\varrho_2$. Property~\emph{(\ref{ineg-gap-gene})} was proved in \cite[Th.~2]{Wu12} under an extra weak reversibility assumption (with no explicit bound on $\varrho_2$). However, except in case $g=d=1$ where reversibility is automatic, an RW with i.d.~bounded increments is not reversible or even weak reversible in general. No reversibility condition is required here. 
\end{ex}
%========================
\section{Bound for $\varrho_2$ via truncation} \label{sec-tronc}
%========================
Let $P$ be any Markov kernel on $\N$, and let us consider the $k$-th truncated (and augmented on the last column) matrix $P_k$ associated with $P$ as in \cite{HerLed14a}. 
If $\sigma(P_k)$ denotes the set of eigenvalues of $ P_k$, define 
$\rho_k := \max\big\{|\lambda|,\, \lambda\in\sigma(P_k),\, |\lambda| <1\big\}$. The weak perturbation method in \cite{HerLed14a} provides the following general result where Condition~(\ref{ass-voisin}) is not required and $V$ is any unbounded increasing sequence. 
\begin{pro} \label{pro-gene-tronc}
Let $P$ be an irreducible and aperiodic Markov kernel on $\N$ satisfying the following drift inequality for some unbounded increasing sequence $(V(n))_{n\in\N}$:
\begin{equation} \label{drift-V-bis}
\exists\delta\in[0,1[,\ \exists L>0, \quad PV \leq \delta V + L\, 1_{\N}. 
\end{equation}
Let $\varrho_V$ be defined in (\ref{def-rho-V}). Then, either $\varrho_V \leq \delta$ and $\limsup_k\rho_k \leq \delta$, or $\varrho_V > \delta$ and $\varrho_{V} = \lim_k\rho_k$. 
\end{pro}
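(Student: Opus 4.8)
\textbf{Proof plan for Proposition~\ref{pro-gene-tronc}.}

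The plan is to exploit the same spectral dichotomy that drives Proposition~\ref{pro-RW-SG}, now comparing the spectrum of $P_{|\cB_V}$ with the spectra of the finite truncations $P_k$. First I would invoke the weak perturbation machinery of \cite{HerLed14a}: the drift inequality (\ref{drift-V-bis}) gives, exactly as in the derivation of (\ref{drift-V}), that $r_{ess}(P_{|\cB_V}) \leq \delta$, so by \cite[Prop.~2.1]{HerLedJAP14} the part of $\sigma(P_{|\cB_V})$ in the annulus $\{\lambda : \delta < |\lambda| \leq 1\}$ consists of finitely many eigenvalues of finite multiplicity, and $\varrho_V$ is the modulus of the dominant such eigenvalue strictly inside the unit disk (or $\varrho_V \leq \delta$ if there are none). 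The truncated-and-augmented matrices $P_k$ are themselves stochastic, so each has $1$ as an eigenvalue and $\rho_k < 1$ is well defined.

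The heart of the argument is the spectral convergence supplied by the weak perturbation theorem of \cite{HerLed14a}, applied to the sequence $(P_k)_k$ viewed as operators on $\cB_V$ converging weakly to $P$ under the drift condition (\ref{drift-V-bis}). The key consequences I would extract are twofold: first, an \emph{upper semicontinuity} statement, namely that no spurious eigenvalues of $P_k$ can accumulate in the annulus $\{\delta < |\lambda| < 1\}$ beyond the eigenvalues of $P_{|\cB_V}$ there, so that $\limsup_k \rho_k$ is controlled by $\max(\delta, \varrho_V)$; and second, a \emph{lower semicontinuity} (persistence) statement, that every eigenvalue $\lambda$ of $P_{|\cB_V}$ with $\delta < |\lambda| < 1$ is the limit of genuine eigenvalues $\lambda_k \in \sigma(P_k)$ with $|\lambda_k| < 1$ for $k$ large. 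Both are standard outputs of the Keller--Liverani type weak perturbation framework once the essential radius is uniformly bounded by $\delta$; here I would cite them directly from \cite{HerLed14a} rather than reprove them.

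With these two facts the dichotomy follows by a clean case split. In the case $\varrho_V \leq \delta$, there are no eigenvalues of $P_{|\cB_V}$ in the open annulus $\{\delta < |\lambda| < 1\}$; upper semicontinuity then forbids any accumulation of the $\rho_k$ above $\delta$, giving $\limsup_k \rho_k \leq \delta$. In the case $\varrho_V > \delta$, let $\lambda_*$ be a dominant subunit eigenvalue of $P_{|\cB_V}$, so $|\lambda_*| = \varrho_V$. Persistence yields eigenvalues $\lambda_k \in \sigma(P_k)$ with $\lambda_k \to \lambda_*$, $|\lambda_k| < 1$, whence $\liminf_k \rho_k \geq \varrho_V$; conversely upper semicontinuity gives $\limsup_k \rho_k \leq \varrho_V$, and the two combine to $\lim_k \rho_k = \varrho_V$.

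The main obstacle is the persistence (lower semicontinuity) direction: weak perturbation results of Keller--Liverani type generically deliver only upper semicontinuity of the spectrum, and producing genuine nearby eigenvalues of $P_k$ requires the finer rank and spectral-projection estimates of \cite{HerLed14a} — in particular, controlling the total rank of the spectral projections of $P_k$ associated with the annulus and showing it matches that of $P_{|\cB_V}$. I would therefore lean on the precise quantitative statements in \cite{HerLed14a} at that step, checking that the drift hypothesis (\ref{drift-V-bis}) with its uniform constants $\delta, L$ (inherited by the augmented truncations) is exactly what those statements require, and that the augmentation on the last column preserves stochasticity so that no eigenvalue of modulus one other than $\lambda = 1$ interferes with the definition of $\rho_k$.
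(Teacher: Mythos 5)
Your plan follows essentially the same route as the paper's proof: the drift inequality gives $r_{ess}(P_{|\cB_V})\leq\delta$ via \cite[Lem.~6.1]{HerLed14a}, and then the spectral rank-stability result \cite[Lem.~7.2]{HerLed14a} supplies exactly the two semicontinuity statements (no spurious eigenvalues in the annulus, plus persistence of each eigenvalue of $P_{|\cB_V}$ in small disks) that drive the case split on whether $\varrho_V\leq\delta$ or $\varrho_V>\delta$. You correctly identified the persistence direction as the delicate step requiring the rank/spectral-projection control from \cite{HerLed14a}, which is precisely the lemma the paper invokes.
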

\begin{proof}{} 
Condition~(\ref{drift-V-bis}) ensures that the assumptions of \cite[Lem.~6.1]{HerLed14a} are satisfied, so that $r_{ess}(P_{|{\cal B}_V})\leq\delta$. Then, using standard duality arguments, the spectral rank-stability property \cite[Lem.~7.2]{HerLed14a} applies to $P_{|{\cal B}_{V}}$ and $P_k$. If $\varrho_V \leq \delta$, then, for each $r$ such that $\delta<r<1$, $\lambda = 1$ is the unique eigenvalue of $P_{|{\cal B}_{V}}$ in $C_r := \{\lambda\in\C : r < |\lambda| \leq 1\}$ (see \cite{Hen93}). From \cite[Lem.~7.2]{HerLed14a}  this property holds for $P_k$ when $k$ is large enough, so that $\limsup_k\rho_k \leq r$. Thus $\limsup_k\rho_k \leq \delta$ since $r$ is arbitrarily close to $\delta$. Now assume that $\varrho_V > \delta$, and let $r$ be such that $\delta < r < \varrho_V$. Then $P_{|{\cal B}_{V}}$ has a finite number of eigenvalues in $C_r$, say $\lambda_0,\lambda_1,\ldots,\lambda_N$, with $\lambda_0=1$, $|\lambda_1| = \varrho_V$ and $|\lambda_k| \leq \varrho_V$ for $k=2,\ldots,N$ (see \cite{Hen93}). 
For $a\in\C$ and $\varepsilon>0$ we define $D(a,\varepsilon) := \{z\in\C : |z-a| < \varepsilon\}$. Now consider any $\varepsilon>0$ such that the disks $D(\lambda_k,\varepsilon)$ for $k=0,\ldots,N$ are disjoint and are contained in $C_r$ pour $k\geq1$. From \cite[Lem.~7.2]{HerLed14a}, for $k$ large enough, $1$ is the only eigenvalue of $P_k$ in $D(1,\varepsilon)$, the others eigenvalues of $P_k$ in $C_r$ are contained in $\cup_{k=1}^ND(\lambda_k,\varepsilon)$, and finally each $D(\lambda_k,\varepsilon)$ contains at least one eigenvalue of $P_k$. Thus each eigenvalue $\lambda\neq 1$ of $P_k$ in $C_r$ has modulus less than $\varrho_V + \varepsilon$, so that $\rho_k \leq \varrho_V + \varepsilon$. Moreover the disk $D(\lambda_1,\varepsilon)$ contains at least an eigenvalue $\lambda$ of $P_k$, so that $\rho_k \geq |\lambda| \geq \varrho_V - \varepsilon$. Thus, for $k$ large enough, we have $\varrho_V - \varepsilon\leq \rho_k \leq \varrho_V + \varepsilon$. 
\end{proof}

Under the assumptions of Proposition~\ref{pro-RW-SG} we deduce the following result from Proposition~\ref{pro-gene-tronc}. 
%\vspace*{-6mm}
%
\begin{cor} \label{th-first}
If $P$ satisfies the assumptions of Proposition~\ref{pro-RW-SG}, then the following properties holds with $\alpha_0$ given in (\ref{fle-alpha}): 
%\vspace*{-3mm}
\begin{enumerate}[(a)]
	\item $\varrho_2 \leq \alpha_0\ \Longleftrightarrow \varrho_{V} \leq \alpha_0$, and in this case we have $\limsup_k\rho_k \leq \alpha_0$; 
	%\vspace*{-1mm}
	\item $\varrho_2 > \alpha_0\ \Longleftrightarrow \varrho_{V} > \alpha_0$, and in this case we have $\varrho_2 = \varrho_{V} = \lim_k\rho_k$. 
\end{enumerate}
\end{cor}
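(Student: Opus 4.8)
The plan is to combine Proposition~\ref{pro-RW-SG}, which relates $\varrho_2$ and $\varrho_V$, with Proposition~\ref{pro-gene-tronc}, which relates $\varrho_V$ to the truncation quantities $\rho_k$. The bridge between the two is the drift inequality~(\ref{drift-V}): under the assumptions of Proposition~\ref{pro-RW-SG}, we already know from its proof that $PV \leq \alpha V + L_\alpha 1_{\N}$ holds for every $\alpha\in(\alpha_0,1)$, with $V = (\pi(n)^{-1/2})_{n\in\N}$. Since (\ref{pi-tail}) gives $\tau\in[0,1)$, the sequence $V$ is unbounded and increasing (at least eventually; any needed monotonicity can be arranged), so Proposition~\ref{pro-gene-tronc} is applicable to this $V$ for each such $\alpha$.

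First I would establish the two equivalences. The forward implications of $(a)$ and $(b)$ are immediate from Proposition~\ref{pro-RW-SG}: part $(a)$ of that proposition says $\varrho_V\leq\alpha_0 \Rightarrow \varrho_2\leq\alpha_0$, and part $(b)$ says $\varrho_V>\alpha_0 \Rightarrow \varrho_2=\varrho_V$, hence $\varrho_2>\alpha_0$. For the converse directions, I would argue by contraposition using the dichotomy: the two conditions $\varrho_V\leq\alpha_0$ and $\varrho_V>\alpha_0$ are mutually exclusive and exhaustive, and they map respectively to $\varrho_2\leq\alpha_0$ and $\varrho_2>\alpha_0$, which are likewise mutually exclusive and exhaustive. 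Thus each implication reverses automatically, giving both biconditionals in $(a)$ and $(b)$.

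Next I would bring in the truncation estimates. Apply Proposition~\ref{pro-gene-tronc} with $\delta$ taken arbitrarily close to $\alpha_0$ from above. In case $(a)$, since $\varrho_V\leq\alpha_0<\delta$, Proposition~\ref{pro-gene-tronc} yields $\limsup_k\rho_k\leq\delta$; letting $\delta\downarrow\alpha_0$ gives $\limsup_k\rho_k\leq\alpha_0$, as claimed. In case $(b)$, we have $\varrho_V>\alpha_0$; choosing $\delta\in(\alpha_0,\varrho_V)$ puts us in the second alternative of Proposition~\ref{pro-gene-tronc}, which gives $\varrho_V=\lim_k\rho_k$. Combined with $\varrho_2=\varrho_V$ from Proposition~\ref{pro-RW-SG}$(b)$, this produces the chain $\varrho_2=\varrho_V=\lim_k\rho_k$.

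The main obstacle I anticipate is the interchange of the two thresholds: Proposition~\ref{pro-gene-tronc} is stated for a fixed $\delta$ satisfying the drift inequality, whereas here the natural threshold is $\alpha_0$, which is only approached by admissible $\delta$ rather than attained (the drift holds for $\alpha>\alpha_0$, not necessarily at $\alpha_0$ itself). I would handle this by the limiting argument above in case $(a)$, and in case $(b)$ by noting that the strict inequality $\varrho_V>\alpha_0$ leaves room to pick a valid $\delta$ strictly between $\alpha_0$ and $\varrho_V$, so the exact identity $\varrho_V=\lim_k\rho_k$ is unaffected by the choice of $\delta$. A minor point to check is that $V$ is genuinely an unbounded increasing sequence as required by Proposition~\ref{pro-gene-tronc}, which follows from $\tau\in[0,1)$ in~(\ref{pi-tail}).
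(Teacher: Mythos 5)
Your proposal is correct and follows exactly the route the paper intends: the paper gives no separate argument for the corollary beyond ``we deduce the following result from Proposition~\ref{pro-gene-tronc}'', and your combination of the dichotomy in Proposition~\ref{pro-RW-SG} with Proposition~\ref{pro-gene-tronc} applied to the drift inequality~(\ref{drift-V}) (taking $\delta\downarrow\alpha_0$ in case $(a)$ and $\delta\in(\alpha_0,\varrho_V)$ in case $(b)$) is precisely the intended deduction. Your attention to the limiting argument in $\delta$ and to the monotonicity of $V$ is, if anything, more careful than the paper's own treatment.
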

As usual the reversible case is simpler. In particular  we can take $C=1$ and $\rho=\varrho_2$ in (\ref{ineg-gap-gene}). Details and numerical illustrations for Metropolis-Hastings kernels are reported in \cite{HerLedHal15}. 
%\vspace*{-16mm}
 
\bibliographystyle{alpha}

\end{document}